\documentclass{amsart}

\usepackage{amsmath,amsxtra,amsthm,amssymb,amsfonts}

%%%%%%%%%%%%%%%%%%%%%%%%%%%%%%%%%%%%%%%%%%%%%%%%%%%%%%%%%%%%%%%%%%%%%%%%%%%%%%%%%%%%%%%%%%%%%%%%%%%%%%%%%

\theoremstyle{plain}

\newtheorem{thm}{Theorem}[section]

\newtheorem{prop}[thm]{Proposition}
\newtheorem{lem}[thm]{Lemma}

\theoremstyle{definition}

\newtheorem{rem}[thm]{Remark}
\newtheorem{exa}[thm]{Example}

%%%%%%%%%%%%%%%%%%%%%%%%%%%%%%%%%%%%%%%%%%%%%%%%%%%%%%%%%%%%%%%%%%%%%%%%%%%%%%%%%%%%%%%%%%%%%%%%%%%%%%%%%

\def\nat{\mathbb{N}}
\def\rls{\mathbb{R}}
\def\exrls{(-\infty,\infty]}

\def\wto{\stackrel{w}{\to}}
\def\ol{\overline}
\def\clco{\operatorname{\ol{co}}}
\def\argmin{\operatornamewithlimits{\arg\min}}

\def\dom{\operatorname{dom}}
\def\cldom{\operatorname{\ol{dom}}}

\def\res{\restriction}

%%%%%%%%%%%%%%%%%%%%%%%%%%%%%%%%%%%%%%%%%%%%%%%%%%%%%%%%%%%%%%%%%%%%%%%%%%%%%%%%%%%%%%%%%%%%%%%%%%%%%%%%%

\begin{document}
\title[The proximal point algorithm]{The proximal point algorithm in metric spaces}
\author[M. Ba\v{c}\'ak]{Miroslav Ba\v{c}\'ak}
\date{\today}
\thanks{This paper was accepted to Israel journal of mathematics. The final version may differ.}
%\subjclass[2010]{Primary: 49J27, 53C23. Secondary: 46C50, 49J45}
\keywords{Non-positive curvature, Fej\'er monotonicity, weak convergence, pro\-ximal point algorithm, convex optimization, gradient flow.}
\address{Miroslav Ba\v{c}\'ak, Max Planck Institute, Inselstrasse 22, 04 103 Leipzig, Germany}
\email{bacak@mis.mpg.de}

\begin{abstract} The proximal point algorithm, which is a well-known tool for finding minima of convex functions, is generalized from the classical Hilbert space framework into a nonlinear setting, namely, geodesic metric spaces of nonpositive curvature. We prove that the sequence generated by the proximal point algorithm weakly converges to a minimizer, and also discuss a related question: convergence of the gradient flow.
\end{abstract}

\maketitle

%%%%%%%%%%%%%%%%%%%%%%%%%%%%%%%%%%%%%%%%%%%%%%%%%%%%%%%%%%%%%%%%%%%%%%%%%%%%%%%%%%%%%%%%%%%%%%%%%%%%%%%%%%%%%%%%%%%%%%%%%%%%%%%%%%%%%%%%%%%%%%%%%%%%%%%%%%%%%%%%%%%%%%%%%%%%%%%%%%%%%%%%%%%%%%%%%%%%%%%%%%%%%%%%%%
%%%%%%%%%%%%%%%%%%%%%%%%%%%%%%%%%%%%%%%%%%%%%%%%%%%%%%%%%%%%%%%%%%%%%%%%%%%%%%%%%%%%%%%%%%%%%%%%%%%%%%%%%%%%%%%%%%%%%%%%%%%%%%%%%%%%%%%%%%%%%%%%%%%%%%%%%%%%%%%%%%%%%%%%%%%%%%%%%%%%%%%%%%%%%%%%%%%%%%%%%%%%%%%%%%
\section{Introduction}

The proximal point algorithm (PPA) is a method for finding a minimizer of a~convex lower semicontinuous (shortly, lsc) function defined on a Hilbert space. Its origin goes back to Martinet, Rockafellar, and Br\'ezis\&Lions \cite{brezis,martinet,rocka}. The algorithm has since become extremely popular among practitioners in optimization, and also offered many challenging mathematical problems. For instance, Rockafellar's 1976 question \cite{rocka} as to whether or not the PPA always converges strongly was settled (in the negative) as late as 1990s by G\"uler \cite{guler}. The literature on the subject has become too extensive to be even partially listed here. We believe the interested reader will easily find further information on this field.

Gradually, many of the algorithms for solving optimization problems have been generalized from linear spaces (Euclidean, Hilbert, Banach) into differentiable manifolds. In particular, the proximal point algorithm in the context of Riemannian manifold (of nonpositive sectional curvature) was studied in \cite{bento,feroli,sevilla,papa}. We continue along these lines and introduce the PPA into geodesic metric spaces of~nonpositive curvature, so-called CAT(0) spaces. Since minimizers of convex lsc functionals in these spaces play an important role in analysis and geometry (see, for instance, Sections~\ref{sec:hoc} and~\ref{subsec:energy}), we dare to believe that the PPA will prove useful. Also, we should like to mention Zaslavski's very recent paper \cite{zasl} with a different approach to the PPA in metric spaces.

\emph{The aim of this paper is to introduce the proximal point algorithm into metric spaces of nonpositive curvature and show weak convergence of this algorithm.}

There are of course natural obstacles one has to overcome in CAT(0) spaces. Unlike Riemannian manifolds, CAT(0) spaces do not come equipped with a Riemannian metric, and, probably relatedly, we do not have a notion of a subgradient of a~convex function. The proof of weak convergence of the PPA in Hilbert spaces, on the other hand, does use both the inner product and the convex subgradient~\cite{brezis,guler,martinet,rocka}, and therefore we cannot simply translate the existing proof into the context of metric spaces. Furthermore, a general CAT(0) space is not locally compact, which does not in general allow to prove strong convergence of the PPA, and forces us to manage with the weak convergence.

The results of the present paper can be of an interest also in Hilbert spaces, as they show that the PPA as well as the gradient flow semigroup are purely metric objects in spite of their linear origins.

\subsection{Proximal point algorithm}
Let $H$ be a Hilbert space and $f:H\to\exrls$ be a convex lsc function which attains its minimum on $H.$ The proximal point algorithm seeks a minimizer of $f$ by successive approximations
\begin{equation}
 x_n=\argmin_{y\in H}\left[f(y)+\frac1{2\lambda_n}\|y-x_{n-1}\|^2\right], \qquad n\in\nat,
\end{equation}
where $x_0\in H$ is a given starting point, and $\lambda_n>0$ for all $n\in\nat.$ The sequence $(x_n)$ is known to converge weakly to a minimizer of $f,$ provided $\sum_1^\infty\lambda_n=\infty,$ see \cite{brezis,guler,martinet,rocka}. A natural question, posed by Rockafellar in \cite{rocka}, whether this convergence can be improved to strong was answered in the negative by G\"uler \cite[Corollary 5.1]{guler}. In other words, weak convergence is the best we can achieve without additional assumptions. It is worth mentioning that counterexamples to strong convergence of the PPA are still very rare \cite{prox,kopecka}.

\subsection{CAT(0) spaces} \label{sec:hoc} Geodesic metric spaces of nonpositive curvature in the sense of Alexandrov, that is, CAT(0) spaces in Gromov's terminology, include Hilbert spaces, $\rls$-tree, Euclidean Bruhat-Tits buildings, complete simply connected Riemannian manifolds of nonpositive sectional curvature, and many other important spaces included in none of the above classes.

There are several equivalent conditions for a geodesic metric space $(X,d)$ to be CAT(0), one of them is the following inequality, which is to be satisfied for any $x\in X,$ any geodesic $\gamma:[a,b]\to X,$ and any $t\in[0,1]:$
\begin{equation} \label{eq:cat}
 d\left(x,\gamma(t)\right)^2\leq (1-t)d\left(x,\gamma(a)\right)^2+td\left(x,\gamma(b)\right)^2-t(1-t)d\left(\gamma(a),\gamma(b)\right)^2.
\end{equation}

Convex functions on CAT(0) spaces are our principal object of interest in this paper. Recall that a function $f:C\to\exrls,$ defined on a convex subset $C$ of a CAT(0) space, is \emph{convex} if, for any geodesic $\gamma:[0,1]\to C,$ the function $f\circ\gamma$ is convex. Here we collect several important examples \cite{book}.

\begin{exa}[Distance functions]
The function
\begin{equation}\label{eq:dist} x\mapsto d\left(x,x_0\right),  \qquad x\in X,\end{equation}
where $x_0$ is a fixed point of $X,$ is convex and continuous. The square of this function is even \emph{strictly} convex. More generally, the distance function $d_C$ to a closed convex subset $C\subset X,$ defined as
$$d_C(x)=\inf_{c\in C} d(x,c),  \qquad x\in X,$$
is convex and $1$-Lipschitz \cite[Proposition 2.4, p.176]{book}.
\end{exa}

\begin{exa}[Displacement functions]
Let $T:X\to X$ be an isometry. The \emph{displacement function} of $T$ is the function $\delta_T:X\to[0,\infty)$ defined by
$$\delta_T(x)=d(x,Tx),$$
for all $x\in X.$ It is convex and Lipschitz \cite[Definition II.6.1]{book}.
\end{exa}

\begin{exa}[Busemann functions]
Let $c:[0,\infty)\to X$ be a geodesic ray. The function $b_c:X\to\rls$ defined by
$$ b_c(x)= \lim_{t\to\infty} \left[d\left(x,c(t) \right) -t \right],  \qquad x\in X,$$
is called the \emph{Busemann function} associated to the ray $c,$ see \cite[Definition II.8.7]{book}. Busemann functions are convex and $1$-Lipschitz. Concrete examples of Busemann functions are given in \cite[p. 273]{book}. Another explicit example of a Busemann function in the CAT(0) space of positive definite $n\times n$ matrices with real entries is found in~\cite[Proposition 10.69]{book}. The sublevel sets of Busemann functions are called \emph{horoballs} and carry a lot of information about the geometry of the space in question, see \cite{book} and the references therein.
\end{exa}

The energy functional is another important instance of a convex function on a CAT(0) space, see \cite[Chapter 7]{jost1}, or more generally in \cite[Chapter 4]{jost2}. Minimizers of the energy functional are called \emph{harmonic maps,} and are of an immense importance in both geometry and analysis.

\subsection{Resolvents and semigroups} \label{subsec:energy}

Let $(X,d)$ be a complete CAT(0) space, and $f:X\to\exrls$ be lsc convex. For $\lambda>0,$ define the \emph{Moreau-Yosida resolvent} of $f$ as
$$J_\lambda(x)=\argmin_{y\in X} \left[f(y)+\frac1{2\lambda}d(y,x)^2\right],$$
and put $J_0(x)=x,$ for all $x\in X.$ This definition in metric spaces with no linear structure first appeared in \cite{jost-o}. The mapping $J_\lambda$ is well defined for all $\lambda\geq0,$ see \cite[Lemma 2]{jost-o} and \cite[Theorem 1.8]{mayer}.

The Moreau-Yosida resolvents are essential in the proof of existence of harmonic maps. Indeed, the energy functional is convex and lsc on a suitable CAT(0) space $Y$ of $\mathcal{L}^2$-mappings, and $J_\lambda(y),$ with an arbitrary $y\in Y,$ is shown to strongly converge to a minimizer of the energy functional (to a harmonic map), as~$\lambda\to\infty.$ For the details, see \cite{jost1,jost-ch,jost2,jost-o}.

In spite of the significance of the convergence of $J_\lambda,$ it is more desirable to establish convergence of the corresponding (gradient flow) semigroup $\left(T_\lambda\right)_{\lambda\geq0},$ which is given as
\begin{equation} \label{eq:flow}
T_\lambda x=\lim_{n\to\infty} \left(J_{\frac{\lambda}n}\right)^n (x),\qquad x\in \cldom f.
\end{equation}
The limit in \eqref{eq:flow} is uniform with respect to $\lambda$ on bounded subintervals of $[0,\infty),$ and $\left(T_\lambda\right)_{\lambda\geq0}$ is a strongly continuous semigroup of nonexpansive mappings, see \cite[Theorem 1.3.13]{jost-ch}, and \cite[Theorem 1.13]{mayer}. Unfortunately, the semigroup $\left(T_\lambda\right)$ convergences only weakly (see Theorem~\ref{thm:flow} below and the subsequent discussion).

It is well known that the PPA is a discrete version of the gradient flow semigroup~\cite{guler,mayer}.

\subsection{Main results}
Let $(X,d)$ be a complete CAT(0) space and $f:X\to\exrls$ be a lsc convex function. The \emph{proximal point algorithm} starting at a point $x_0\in X$ generates in the $n$-th step, $n\in\nat,$ the point 
\begin{equation} \label{eq:ppa}
 x_n=\argmin_{y\in X}\left[f(y)+\frac1{2\lambda_n}d\left(y,x_{n-1}\right)^2\right].
\end{equation}
Recall that $x_n$ is well-defined. By G\"uler's result of \cite{guler}, only weak convergence of $(x_n)$ to a minimizer of $f$ can be expected in general.

The main results of the present paper are the following two theorems.
\begin{thm} \label{thm:ppa}
Let $(X,d)$ be a complete CAT(0) space, and $f:X\to\exrls$ be a~convex lsc function. Suppose that $f$ has a minimizer, that is, there exists a point $c\in X$ such that
$$f(c)=\inf_{x\in X} f(x).$$
Then, for an arbitrary starting point $x_0\in X,$ and a sequence of positive reals $\left(\lambda_n\right)$ such that $\sum_1^\infty\lambda_n=\infty,$ the sequence $(x_n)\subset X$ defined by \eqref{eq:ppa} weakly converges to a minimizer of $f.$
\end{thm}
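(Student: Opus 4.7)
The approach I propose is to follow the Hilbert-space template but replace inner-product manipulations with the CAT(0) four-point inequality \eqref{eq:cat}. Three ingredients are needed: (a) a fundamental resolvent inequality, (b) Fej\'er monotonicity of $(x_n)$ with respect to the set $F$ of minimizers of $f$, and (c) a metric analogue of Opial's lemma.

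First I would establish, for every $x,y\in X$ and $\lambda>0$, the inequality
\begin{equation*}
d\bigl(J_\lambda x,y\bigr)^2 + d\bigl(J_\lambda x,x\bigr)^2 \leq d(x,y)^2 + 2\lambda\bigl[f(y)-f(J_\lambda x)\bigr].
\end{equation*}
This follows from applying \eqref{eq:cat} to points on the geodesic from $J_\lambda x$ to $y$, combined with the minimality defining $J_\lambda x$. Taking $y=c\in F$ in this inequality immediately yields $d(x_n,c)\leq d(x_{n-1},c)$, i.e.\ Fej\'er monotonicity with respect to $F$, and upon telescoping
\begin{equation*}
 \sum_{n=1}^{\infty} 2\lambda_n\bigl[f(x_n)-f(c)\bigr] \leq d(x_0,c)^2 < \infty .
\end{equation*}
Since $f(x_n)$ is nonincreasing (because $x_{n-1}$ is an admissible competitor in the minimization defining $x_n$, so $f(x_n)\leq f(x_n)+\tfrac{1}{2\lambda_n}d(x_n,x_{n-1})^2\leq f(x_{n-1})$) and $\sum \lambda_n=\infty$, the above summability forces $f(x_n)\to f(c)=\inf_X f$.

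Next I would extract weak convergence. Fej\'er monotonicity makes $(x_n)$ bounded, so every subsequence $(x_{n_k})$ admits an asymptotic center $w\in X$. Because convex lsc functions on a complete CAT(0) space are sequentially lower semicontinuous with respect to weak convergence (a fact that I would either cite or derive from the convexity of sublevel sets together with the defining property of asymptotic centers), one gets
\begin{equation*}
 f(w) \leq \liminf_{k\to\infty} f(x_{n_k}) = \inf_X f,
\end{equation*}
so $w\in F$. Thus every asymptotic center of every subsequence belongs to the closed convex set $F$. Combining this with Fej\'er monotonicity of $(x_n)$ with respect to $F$ through a CAT(0) analogue of Opial's lemma then yields that $(x_n)$ has a unique asymptotic center in $F$, and $x_n\wto w\in F$.

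The main obstacle is the last step. Unlike in Hilbert space, weak convergence in CAT(0) spaces is not given by a duality but is defined through asymptotic centers (the $\Delta$-convergence of Lim, Kirk--Panyanak, and Jost), so the classical Opial argument has to be replaced by its metric counterpart. Two properties must be verified carefully: that the set $F$ of minimizers is weakly sequentially closed (which reduces to weak lsc of $f$), and that any bounded Fej\'er-monotone sequence with all subsequential asymptotic centers lying in a common closed convex set has a single asymptotic center and hence weakly converges. Once this metric Opial-type lemma is in place, the proof assembles directly from steps (a)--(c).
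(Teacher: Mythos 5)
Your proposal is correct and follows essentially the same route as the paper: your resolvent inequality specialized at a minimizer is the paper's estimate \eqref{eq:estim}, Fej\'er monotonicity plus the summability/monotonicity argument gives $f(x_n)\to\inf f$, and your concluding step is exactly the paper's Lemma~\ref{lem:lsc} (weak lsc of convex lsc functions) combined with Proposition~\ref{prop:fejer}\eqref{item:iii}, which is precisely the metric Opial-type lemma you postulate. The one point to tighten is that weak lsc should be applied to weak cluster points (i.e.\ $\Delta$-limits of subsequences, which exist by weak compactness of bounded sequences) rather than to the asymptotic center of an arbitrary subsequence, since a subsequence need not weakly converge to its asymptotic center; with that adjustment your argument coincides with the paper's.
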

The proof of Theorem~\ref{thm:ppa} is given in Section~\ref{sec:proof}. It is based on Fej\'er monotonicity, whereas the classical Hilbert space proofs do not use this feature \cite{brezis,guler,martinet,rocka}. However, it was later observed by Combettes that the PPA sequence is Fej\'er monotone \cite{combettes}.

We also study an object which is closely related to the PPA, namely, the gradient flow, and obtain the following result.
\begin{thm}\label{thm:flow}
Let $X$ be a complete CAT(0) space, and $f:X\to\exrls$ be lsc convex. Assume that $f$ attains its minimum on $X.$ Then, given a starting point $x\in\cldom f,$ the gradient flow $T_\lambda x$ defined in \eqref{eq:flow} weakly converges to a minimizer of $f,$ as $\lambda\to\infty.$
\end{thm}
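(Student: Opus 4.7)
The plan is to follow closely the strategy used for Theorem~\ref{thm:ppa}, now for the continuous orbit $\lambda\mapsto T_\lambda x$ in place of the discrete PPA iterates. Two ingredients will drive the argument: Fej\'er monotonicity with respect to $S:=\argmin f$, and the decay $f(T_\lambda x)\to\inf f$ as $\lambda\to\infty$. Once both are in hand, the weak convergence will follow by the same asymptotic-centre argument used in Section~\ref{sec:proof}.

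For Fej\'er monotonicity: every $c\in S$ is a fixed point of every resolvent $J_\mu$, since it minimizes both $f$ and $\frac{1}{2\mu}d(\cdot,c)^2$; passing to the limit in $(J_{\lambda/n})^n c=c$ gives $T_\lambda c=c$ for every $\lambda\geq 0$. Since each $T_\lambda$ is nonexpansive and the semigroup identity $T_{\lambda+s}=T_s\circ T_\lambda$ holds, the map $\lambda\mapsto d(T_\lambda x,c)$ is non-increasing for every $c\in S$, and in particular the orbit $\{T_\lambda x:\lambda\geq 0\}$ is bounded.

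For the energy decay: from $f(J_\mu y)\leq f(y)$ (immediate from the definition of $J_\mu$) and lsc of $f$, one obtains $f(T_s y)\leq f(y)$ for all $y\in\cldom f$ and $s\geq 0$, so $\lambda\mapsto f(T_\lambda x)$ is non-increasing. To pin down the limit, apply Theorem~\ref{thm:ppa} with a constant step size $\mu>0$: the iterates $z_n:=(J_\mu)^n x$ weakly converge to some $c^\ast\in S$, and monotonicity together with lsc of $f$ then forces $f(z_n)\downarrow\inf f$. Transferring this decay to the semigroup via the approximation $T_\lambda x=\lim_n(J_{\lambda/n})^n x$ (or, more directly, via the Mayer energy estimate $f(T_\lambda x)-\inf f\leq d(x,c)^2/(2\lambda)$ from \cite{mayer,jost-ch}) yields $f(T_\lambda x)\to\inf f$.

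Finally, let $\lambda_k\to\infty$. The sequence $(T_{\lambda_k}x)$ is bounded and hence has at least one weak subsequential limit $z$; by lsc of $f$ and the energy decay, $f(z)=\inf f$, i.e.\ $z\in S$. Fej\'er monotonicity of the orbit with respect to the nonempty set $S$ then forbids two distinct weak subsequential limits, exactly as in the PPA case. I expect the main obstacle to be the second step: the Fej\'er property transfers smoothly from $J_\mu$ to $T_\lambda$, but carrying the asymptotic decay of values from the discrete PPA iterates over to the continuous semigroup demands either a careful invocation of the Crandall--Liggett-type approximation or the Mayer energy inequality; once this is done, the weak-convergence conclusion is a verbatim reuse of the machinery developed for Theorem~\ref{thm:ppa}.
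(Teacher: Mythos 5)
Your plan has the same skeleton as the paper's proof: reduce to sequences $\lambda_n\to\infty$, establish (a) Fej\'er monotonicity of the orbit with respect to the set $C$ of minimizers and (b) the minimizing property $f(T_\lambda x)\to\inf f$, and then conclude exactly as for the PPA, via weak lower semicontinuity of $f$ (Lemma~\ref{lem:lsc}) and Proposition~\ref{prop:fejer}\eqref{item:iii}. The paper simply imports (a) and (b) from Mayer (Theorems~2.38 and~2.39 of \cite{mayer}); you derive (a) yourself: every minimizer is a fixed point of every $J_\mu$, hence of every $T_\lambda$, and nonexpansiveness plus the semigroup law make $\lambda\mapsto d(T_\lambda x,c)$ non-increasing. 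This is correct and self-contained, and monotonicity in the continuous parameter also makes the passage from sequential to continuous weak convergence (independence of the limit of $T_{\lambda_n}x$ from the chosen sequence) immediate by interleaving two sequences, a point the paper leaves implicit.

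The one step whose justification does not work as written is your first route to (b) through the constant-step PPA: from $z_n=(J_\mu)^n x\wto c^\ast\in C$, ``monotonicity together with lsc'' only yields $\liminf_n f(z_n)\geq\inf f$, which is the trivial direction; weak convergence to a minimizer does not by itself force convergence of the values, since $f$ is only weakly lsc. What you need is the quantitative estimate \eqref{eq:rate} from the proof of Theorem~\ref{thm:ppa} (with $\sum_{k\leq n}\lambda_k=n\mu\to\infty$), and the cleanest transfer to the semigroup is to apply \eqref{eq:rate} with $n$ steps of size $\lambda/n$ and pass to the limit $(J_{\lambda/n})^n x\to T_\lambda x$ using lower semicontinuity of $f$: this yields precisely $f(T_\lambda x)-\inf f\leq d(x,c)^2/(2\lambda)$, i.e.\ the energy bound you mention as the ``more direct'' alternative and which the paper quotes as Theorem~2.39 of \cite{mayer}. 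With that variant chosen, your argument is complete and essentially coincides with the paper's proof, the only difference being that the two key inputs are proved rather than cited.
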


In \cite[p. 24]{jost-ch}, the author discusses the following problem. If there is a sequence $\left(\lambda_n\right)\subset(0,\infty)$ such that $\lambda_n\to\infty,$ and that the sequence $\left(T_{\lambda_n}x\right)_n$ is bounded, is it then the case that $\left(T_\lambda x\right)$ converges (strongly) to a minimizer of $f,$ as $\lambda\to\infty$? The answer is no: by Fej\'er monotonicity we know that the sequence $\left(T_{\lambda_n}x\right)_n$ is bounded, see Proposition~\ref{prop:fejer}\eqref{item:iii} below, however Baillon's example \cite{baillon} shows that there is a semigroup which converges weakly to a minimizer, but fails to converge strongly. For the details, see the proof of Theorem~\ref{thm:flow} in Section~\ref{sec:proof}.

We end the Introduction with the following two remarks.
\begin{rem}[Rate of the convergence]
 As we shall see in the proof of Theorem~\ref{thm:ppa}, more precisely in~\eqref{eq:rate}, for any $n\in\nat,$ we have
$$f(x_n)-\inf f\leq\frac{K}{\sum_{k=1}^n\lambda_k},$$
where $K$ is a positive constant. In other words, rate of the weak convergence is
$$f(x_n)-\inf f=O\left(\frac1{\sum_{k=1}^n\lambda_k}\right).$$
\end{rem}
\begin{rem}[Strong convergence] \label{rem:uc} In order to ensure strong convergence in Theorems~\ref{thm:ppa} and~\ref{thm:flow}, we need to impose additional assumptions on the data. For instance, it would be sufficient to require the underlying space to be locally compact. Or, we may require the function $f$ to be uniformly convex on bounded subsets of its domain, see \cite[Theorem 27.1(iii)]{baucom} and \cite[Lemma 1.7]{mayer}. Recall the definition. Let $\phi:[0,\infty)\to[0,\infty]$ be a non-decreasing function vanishing only at $0.$ A function $h:X\to\exrls$ is \emph{uniformly convex} on a set $A\subset\dom f$ with modulus $\phi$ if,
$$ h\left(\alpha x+(1-\alpha)y\right)+\alpha(1-\alpha)\phi\left(d(x,y)\right)\leq\alpha h(x)+(1-\alpha)h(y),$$
for any $x,y\in A$ and any $\alpha\in[0,1].$
We refer the reader to Remark~\ref{rem:uc} for the proof that uniform convexity of the function implies strong convergence in Theorems~\ref{thm:ppa} and~\ref{thm:flow}.

\end{rem}

\subsection*{Acknowledgment} I am grateful to J\"urgen Jost for his very valuable comments.

%%%%%%%%%%%%%%%%%%%%%%%%%%%%%%%%%%%%%%%%%%%%%%%%%%%%%%%%%%%%%%%%%%%%%%%%%%%%%%%%%%%%%%%%%%%%%%%%%%%%%%%%%%%%%%%%%%%%%%%%%%%%%%%%%%%%%%%%%%%%%%%%%%%%%%%%%%%%%%%%%%%%%%%%%%%%%%%%%%%%%%%%%%%%%%%%%%%%%%%%%%%%%%%%%%%%%%%%%%%
\section{Preliminaries}
We first recall basic notation concerning CAT(0) spaces. For further details on the subject, the reader is referred to \cite{book}. Let $(X,d)$ be a CAT(0) space. Having two points $x,y\in X,$ we denote the geodesic segment from $x$ to $y$ by $[x,y].$ We usually do not distinguish between a geodesic and its geodesic segment, as no confusion can arise. A set $C\subset X$ is \emph{convex} if $x,y\in C$ implies $[x,y]\subset C.$ For a point $z\in[x,y],$ we write $z=tx+(1-t)y,$ where $t=d(z,y)/d(x,y).$

Given $x,y,z\in X,$ the symbol $\alpha(y,x,z)$ denotes the (Alexandrov) angle between the geodesics $[x,y]$ and $[x,z]$. The corresponding angle in the comparison triangle is denoted $\alpha'(y,x,z).$

\subsection{Metric projections}\label{subsec:proj}
For any metric space $(X,d)$ and $C\subset X,$ define the \emph{distance function} by
$$d_C(x)=\inf_{c\in C} d(x,c),\quad x\in X.$$
Note that the function $d_C$ is convex and continuous provided $X$ is CAT(0) and $C$ is convex and complete \cite[Cor. 2.5, p.178]{book}.
\begin{prop}
Let $(X,d)$ be a CAT(0) space and $C\subset X$ be complete and convex. Then:
\begin{enumerate}
\item For every $x\in X,$ there exists a unique point $P_C(x)\in C$ such that
$$d\left(x,P_C(x)\right)=d_C(x).$$
\item If $y\in\left[x,P_C(x)\right],$ then $P_C(x)=P_C(y).$
\item If $x\in X\setminus C$ and $y\in C$ such that $P_C(x)\neq y,$ then $\alpha\left(x,P_C(x),y\right)\geq\frac\pi2.$
\item The mapping $P_C$ is a non-expansive retraction from $X$ onto $C.$
\end{enumerate}
\label{prop:proj}
\end{prop}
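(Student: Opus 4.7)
For \textbf{(i)}, the natural route is the classical CAT(0) midpoint trick: pick a minimising sequence $(c_n)\subset C$ with $d(x,c_n)\to d_C(x)$; since the midpoint $m_{n,m}$ of $c_n$ and $c_m$ is in $C$ by convexity, the CAT(0) inequality~\eqref{eq:cat} at $t=1/2$ combined with $d(x,m_{n,m})\ge d_C(x)$ rearranges to
$$d(c_n,c_m)^2\le 2d(x,c_n)^2+2d(x,c_m)^2-4d_C(x)^2\longrightarrow 0,$$
so $(c_n)$ is Cauchy and its limit (by completeness of $C$) is the desired $P_C(x)$; uniqueness follows from the same inequality applied to two candidate minimisers. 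For \textbf{(ii)}, if $y\in[x,P_C(x)]$ then $d(x,P_C(x))=d(x,y)+d(y,P_C(x))$, and for any $c\in C$ the triangle inequality yields $d(y,c)\ge d(x,c)-d(x,y)\ge d_C(x)-d(x,y)=d(y,P_C(x))$, so $P_C(x)$ is the nearest point of $C$ to $y$ and equals $P_C(y)$ by~(i).

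For \textbf{(iii)}, write $p:=P_C(x)$ and fix $y\in C\setminus\{p\}$. For any $x'\in[x,p]\setminus\{p\}$ and any $q\in[p,y]\setminus\{p\}$, part~(ii) gives $P_C(x')=p$, so every point on the sub-segment $[p,q]\subset C$ lies at distance $\ge d(x',p)$ from $x'$. Applying~\eqref{eq:cat} along the geodesic $[p,q]$ with vertex $x'$ and rearranging produces
$$d(x',p)^2+(1-s)d(p,q)^2\le d(x',q)^2,\qquad s\in(0,1];$$
letting $s\to 0^+$ yields $d(x',p)^2+d(p,q)^2\le d(x',q)^2$, i.e.\ the comparison angle $\alpha'(x',p,q)\ge\pi/2$. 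Since this bound holds for arbitrarily small $x'$ and $q$, and in a CAT(0) space the Alexandrov angle $\alpha(x,p,y)$ is the limit of such comparison angles, $\alpha(x,p,y)\ge\pi/2$.

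For \textbf{(iv)}, the retraction property is immediate from~(i). For non-expansiveness, set $p_i:=P_C(x_i)$ and let $c_i:[0,1]\to X$ be the geodesic from $p_i$ to $x_i$. By the CAT(0) convexity of distances between geodesics, $h(t):=d(c_1(t),c_2(t))$ is convex on $[0,1]$ with $h(0)=d(p_1,p_2)$ and $h(1)=d(x_1,x_2)$; the first variation formula gives
$$h'(0^+)=-d(p_1,x_1)\cos\alpha(x_1,p_1,p_2)-d(p_2,x_2)\cos\alpha(x_2,p_2,p_1)\ge 0,$$
both cosines being non-positive by~(iii). A convex function with non-negative right derivative at the left endpoint is non-decreasing, so $d(p_1,p_2)\le d(x_1,x_2)$. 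The main subtle step is the passage in~(iii) from comparison angles on sub-triangles to the Alexandrov angle on the whole triangle; this relies on the standard CAT(0) fact that $\alpha(x,p,y)$ is the limit (indeed infimum) of the comparison angles of shrinking sub-triangles at $p$, which is precisely the reason we must use~(ii) to let $x'$ vary along $[p,x]$ in addition to $q$ along $[p,y]$.
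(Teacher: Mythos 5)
Parts (i)--(iii) of your argument are sound and are essentially the textbook proof: the midpoint trick with \eqref{eq:cat} at $t=1/2$ for existence and uniqueness, the reduction of (ii) to uniqueness via the triangle inequality, and in (iii) the passage from $d(x',p)^2+d(p,q)^2\le d(x',q)^2$ to comparison angles $\alpha'(x',p,q)\ge\pi/2$ and then to the Alexandrov angle (which, being the limit of comparison angles over shrinking subsegments, inherits the bound). Note that the paper itself gives no proof -- it cites \cite[Proposition 2.4, p.176]{book} -- so a self-contained proof must justify every step on its own.

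The genuine gap is in (iv). You invoke ``the first variation formula'' for $h(t)=d\left(c_1(t),c_2(t)\right)$ with \emph{both} endpoints moving, as an identity in the Alexandrov angles at $p_1$ and $p_2$. In a Riemannian manifold this is standard, but in a CAT(0) space it is not an off-the-shelf fact, and the half you actually use, namely the lower bound $h'(0^+)\ge -d(p_1,x_1)\cos\alpha(x_1,p_1,p_2)-d(p_2,x_2)\cos\alpha(x_2,p_2,p_1)$, is precisely the nontrivial content -- essentially as strong as the non-expansiveness you are proving. It does not follow from the single-endpoint first variation (which is valid in CAT(0) via monotonicity of comparison angles): splitting $h(t)-h(0)$ as $\left[d(c_1(t),c_2(t))-d(c_1(t),p_2)\right]+\left[d(c_1(t),p_2)-d(p_1,p_2)\right]$ controls the second bracket by $-t\,d(p_1,x_1)\cos\alpha(x_1,p_1,p_2)$, but the first bracket involves the angle at $p_2$ between $[p_2,x_2]$ and the \emph{moving} geodesic $[p_2,c_1(t)]$, so you need continuity of the Alexandrov angle at a fixed vertex as the far endpoint varies -- a CAT(0)-specific lemma (it follows from the triangle inequality for angles together with $\alpha\left(c_1(t),p_2,p_1\right)\le\alpha'\left(c_1(t),p_2,p_1\right)\to0$), which you neither state nor prove. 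Joint convexity of $(t,s)\mapsto d(c_1(t),c_2(s))$ does not rescue this: sublinearity of directional derivatives of a convex function gives the \emph{opposite} inequality $h'(0^+)\le\partial_1^+ +\partial_2^+$. Two ways to repair: (a) prove the lower bound as just sketched, using angle continuity at a fixed vertex; or (b) bypass angles entirely -- your computation in (iii) with $x'=x_i$ and $q=c$ already yields $d(x_i,c)^2\ge d(x_i,p_i)^2+d(p_i,c)^2$ for all $c\in C$, and combining the two resulting inequalities (via the glued comparison triangles/Alexandrov lemma argument of \cite{book}, or via the CAT(0) four-point inequality) gives $d(p_1,p_2)\le d(x_1,x_2)$ directly.
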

\begin{proof} See \cite[Proposition 2.4, p.176]{book}. \end{proof}

The mapping $P_C$ is called the \emph{metric projection} onto $C.$

\subsection{Weak convergence} \label{subsec:weak}
A notion of weak convergence in CAT(0) spaces was first introduced by J\"urgen Jost in \cite[Definition 2.7]{jost-p}. Sosov later defined his $\psi$- and $\phi$-convergences, both generalizing Hilbert space weak convergence into geodesic metric spaces \cite{sos}. Recently, Kirk and Panyanak extended Lim's $\Delta$-convergence \cite{lim} into CAT(0) spaces \cite{kp} and finally, Esp\'inola and Fern\'andez-Le\'on \cite{efl} modified Sosov's $\phi$-convergence to obtain an equivalent formulation of $\Delta$-convergence in CAT(0) spaces. This is, however, exactly the original weak convergence due to Jost~\cite{jost-p}.

Let $X$ be a complete CAT(0) space. Suppose $(x_n)\subset X$ is a bounded sequence and define its \emph{asymptotic radius} about a given point $x\in X$ as
$$r(x_n,x)=\limsup_{n\to\infty} d(x_n,x),$$
and \emph{asymptotic radius} as
$$r(x_n)=\inf_{x\in X} r(x_n,x).$$
Further, we say that a point $x\in X$ is the \emph{asymptotic center} of $(x_n)$ if
$$ r(x_n,x)=r(x_n).$$
Since $X$ is a complete CAT(0) space we know that the asymptotic center of $(x_n)$ exists and is unique \cite[Proposition 7]{dks}.

We shall say that $(x_n)\subset X$ \emph{weakly converges} to a point $x\in X$ if $x$ is the asymptotic center of each subsequence of $(x_n).$ We use the notation $x_n\wto x.$ Clearly, if $x_n\to x,$ then $x_n\wto x.$

If there is a subsequence $(x_{n_k})$ of $(x_n)$ such that $x_{n_k}\wto z$ for some $z\in X,$ we say that $z$ is a \emph{weak cluster point} of the sequence $(x_n).$ Each bounded sequence has a weak cluster point, see \cite[Theorem 2.1]{jost-p}, or \cite[p. 3690]{kp}.

\begin{prop}
A bounded sequence $(x_n)\subset X$ weakly converges to a point $x\in X$ if and only if, for any geodesic $\gamma:[0,1]\to X$ with $x\in\gamma,$ we have
$$d\left(x,P_\gamma(x_n)\right)\to 0, \quad \text{as } n\to\infty.$$
\label{prop:equiv}
\end{prop}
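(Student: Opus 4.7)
My plan is to reduce both directions to a single Pythagorean-type inequality for the metric projection: for any complete convex subset $C\subset X$, any $z\in X$, and any $w\in C$,
\begin{equation*}
d(z,w)^{2}\geq d\!\left(z,P_{C}(z)\right)^{2}+d\!\left(P_{C}(z),w\right)^{2}.
\end{equation*}
Applying the CAT(0) comparison~\eqref{eq:cat} to the geodesic $\sigma:[0,1]\to C$ from $P_{C}(z)$ to $w$, and combining with $d(z,\sigma(t))\geq d(z,P_{C}(z))$ (since $P_{C}(z)$ is the nearest point of $C$ to $z$), one divides by $t>0$ and lets $t\to 0^{+}$ to obtain the claim. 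I shall apply this throughout with $C$ equal to the image of $\gamma$, which is complete and convex.

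For the forward implication I argue by contradiction: assume $x_{n}\wto x$ but $d(x,P_{\gamma}(x_{n}))\not\to 0$ for some geodesic $\gamma$ through $x$. Pass to a subsequence with $d(x,P_{\gamma}(x_{n_{k}}))\geq\eps$ for some $\eps>0$, and, using compactness of $\gamma$, to a further subsequence along which $q_{k}:=P_{\gamma}(x_{n_{k}})\to q_{\ast}\in\gamma$, necessarily with $d(q_{\ast},x)\geq\eps$. The Pythagorean inequality with $w=x$ gives $d(x_{n_{k}},x)^{2}\geq d(x_{n_{k}},q_{k})^{2}+\eps^{2}$, and since $d(q_{k},q_{\ast})\to 0$, taking $\limsup$ yields $r(x_{n_{k}},q_{\ast})^{2}\leq r(x_{n_{k}},x)^{2}-\eps^{2}$, contradicting the fact that $x$ is the asymptotic center of $(x_{n_{k}})$.

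For the converse, fix any subsequence $(x_{n_{k}})$ and let $z$ denote its asymptotic center; I wish to show $z=x$. If $z\neq x$, apply the hypothesis to the geodesic $\gamma$ joining $x$ to $z$, so that $p_{k}:=P_{\gamma}(x_{n_{k}})\to x$. The estimate $|d(x_{n_{k}},p_{k})-d(x_{n_{k}},x)|\leq d(p_{k},x)\to 0$ gives $\limsup_{k}d(x_{n_{k}},p_{k})=r(x_{n_{k}},x)$, and the Pythagorean inequality with $w=z$, combined with $d(p_{k},z)\to d(x,z)>0$, yields $r(x_{n_{k}},z)^{2}\geq r(x_{n_{k}},x)^{2}+d(x,z)^{2}$, contradicting the minimality of $z$; hence $z=x$. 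The only real work is the Pythagorean-type projection inequality itself; once that is in hand, both implications reduce to short asymptotic-radius arguments.
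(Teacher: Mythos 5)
Your proposal is correct, and it is necessarily a different route from the paper: the paper does not prove Proposition~\ref{prop:equiv} at all, it simply cites Esp\'inola and Fern\'andez-Le\'on \cite{efl}, whereas you supply a self-contained argument. Your key tool, the inequality $d(z,w)^2\geq d\left(z,P_C(z)\right)^2+d\left(P_C(z),w\right)^2$ for $w\in C$, is derived correctly from \eqref{eq:cat} together with the minimality of $P_C(z)$ (divide by $t$ and let $t\to0^+$); it is in effect a quantitative version of the obtuse-angle property in Proposition~\ref{prop:proj}\textnormal{(iii)}. Both implications then go through as you describe: in the forward direction the compactness of the image of $\gamma$ legitimately yields a convergent subsequence $q_k\to q_*$ with $d(q_*,x)\geq\eps$, and the estimate $r(x_{n_k},q_*)^2\leq r(x_{n_k},x)^2-\eps^2$ contradicts the fact that $x$ is the asymptotic center of every subsequence; in the converse direction you correctly use existence and uniqueness of the asymptotic center $z$ of an arbitrary subsequence \cite{dks}, apply the hypothesis to the geodesic $[x,z]$, and the limit computation $\limsup_k\left[d(x_{n_k},p_k)^2+d(p_k,z)^2\right]=r(x_{n_k},x)^2+d(x,z)^2$ (valid since $d(p_k,z)\to d(x,z)$) contradicts the minimality of $z$ unless $z=x$. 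What your approach buys is independence from the external reference, at the cost of the small compactness extraction in the forward direction; what the citation buys the paper is brevity and the fact that \cite{efl} places the statement in the broader context of the equivalence of $\Delta$-convergence and Jost's weak convergence.
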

\begin{proof} See \cite[Proposition 5.2]{efl}. \end{proof}

We shall say that a function $f:X\to\exrls$ is \emph{weakly lsc} at a given point $x\in X$ if
$$\liminf_{n\to\infty} f(x_n)\geq f(x)$$
for each sequence $x_n \wto x.$

It is easy to verify that in Hilbert spaces the classical weak convergence coincides with the weak convergence defined above.

\subsection{Fej\'er monotone sequences}
A sequence $(x_n)\subset X$ is \emph{Fej\'er monotone} with respect to $C$ if, for any $c\in C,$
$$ d(x_{n+1},c) \leq d(x_n,c), \qquad n\in\nat.$$

\begin{prop}
Let $(x_n)\subset X$ be a Fej\'er monotone sequence with respect to $C.$ Then:
\begin{enumerate}
\item $(x_n)$ is bounded, \label{item:i}
\item $d_C(x_{n+1}) \leq d_C(x_n)$ for each $n\in\nat.$ \label{item:ii}
\item $(x_n)$ weakly converges to some $x\in C$ if and only if all weak cluster points of $(x_n)$ belong to $C.$ \label{item:iii}
\item $(x_n)$ converges to some $x\in C$ if and only if $d(x_n,C)\to 0.$ \label{item:iv}
\end{enumerate}
\label{prop:fejer}
\end{prop}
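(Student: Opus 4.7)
The plan is to treat items (i)--(iv) in turn, leaning throughout on the basic contraction $d(x_{n+1},c) \leq d(x_n,c)$, valid for every $c \in C$.

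Items (i) and (ii) require essentially no work: fixing any $c \in C$ and iterating gives $d(x_n,c) \leq d(x_0,c)$, so the sequence stays in a ball about $c$; and taking the infimum over $c \in C$ on both sides of the Fej\'er inequality yields (ii).

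For (iii), the ``only if'' direction is immediate from the definition of weak convergence. For the ``if'' direction, I plan to exploit the observation that for each $c \in C$, the non-increasing non-negative sequence $d(x_n,c)$ converges to some $\ell(c) \geq 0$. Given two weak cluster points $x,y$ of $(x_n)$---both in $C$ by hypothesis---pick a subsequence $x_{n_k} \wto x$; since $x$ is the asymptotic center of $(x_{n_k})$, comparing $r(x_{n_k},x) = \ell(x)$ with $r(x_{n_k},y) = \ell(y)$ gives $\ell(x) \leq \ell(y)$, and symmetrically $\ell(y) \leq \ell(x)$. Uniqueness of the asymptotic center then forces $x = y$, so all weak cluster points coincide with a single $\bar x \in C$. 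The step I expect to be the main obstacle is the upgrade from ``all weak cluster points equal $\bar x$'' to actual weak convergence, i.e., that $\bar x$ is the asymptotic center of \emph{every} subsequence. For this I plan to pick an arbitrary subsequence $(x_{m_k})$ with its own asymptotic center $w$, extract a sub-subsequence $(x_{m_{k_j}}) \wto \bar x$, and sandwich $\ell(\bar x) = r(x_{m_{k_j}},\bar x) \leq r(x_{m_{k_j}},w) \leq r(x_{m_k},w) \leq r(x_{m_k},\bar x) = \ell(\bar x)$; uniqueness of the asymptotic center will then yield $w = \bar x$.

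For (iv), the forward direction is immediate from $d_C(x_n) \leq d(x_n,x)$. For the converse I will show $(x_n)$ is Cauchy: given $\eps > 0$, pick $N$ with $d_C(x_N) < \eps/2$, choose $c \in C$ with $d(x_N,c) < \eps/2$, and apply Fej\'er monotonicity to obtain $d(x_m,c) < \eps/2$ for all $m \geq N$, hence $d(x_m,x_n) < \eps$ for $m,n \geq N$. Completeness of $X$ gives a limit $x$, and continuity of $d_C$ together with $d_C(x_n) \to 0$ places $x$ in the (closed, hence complete) set $C$.
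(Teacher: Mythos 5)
Your proof is correct; note, however, that the paper itself offers no argument here at all -- it simply cites Proposition 3.3 of the reference denoted [mb] -- so what you have written supplies the proof the paper outsources, and it is essentially the standard asymptotic-center (Opial-type) argument found in that source rather than anything appearing in this paper. Items (i), (ii) and the easy directions of (iii) and (iv) are as routine as you indicate. The substantive part is your ``if'' direction of (iii), and it is sound: Fej\'er monotonicity makes $d(x_n,c)$ converge to some $\ell(c)$ for every $c\in C$, so for a subsequence $x_{n_k}\wto x$ and another cluster point $y\in C$ one gets $r(x_{n_k},x)=\ell(x)\leq r(x_{n_k},y)=\ell(y)$ and, by symmetry, $\ell(x)=\ell(y)$; then $y$ also attains the asymptotic radius of $(x_{n_k})$ and uniqueness of asymptotic centers forces $x=y$. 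Your sandwich step upgrading ``unique weak cluster point $\bar x$'' to genuine weak convergence is exactly what is needed, since weak convergence here is defined by requiring $\bar x$ to be the asymptotic center of \emph{every} subsequence, and the chain $\ell(\bar x)=r(x_{m_{k_j}},\bar x)\leq r(x_{m_{k_j}},w)\leq r(x_{m_k},w)\leq r(x_{m_k},\bar x)=\ell(\bar x)$ together with uniqueness does give $w=\bar x$; you implicitly also use that a weak cluster point of a subsequence is one of the full sequence, and that boundedness (item (i)) guarantees at least one cluster point exists -- both fine. Two implicit hypotheses deserve a word: for (i) you need $C\neq\emptyset$, and in the converse of (iv) you assert that $C$ is closed in order to pass from $d_C(x)=0$ to $x\in C$. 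Neither appears in the literal statement, but (iv) is false for non-closed $C$, and in the paper's applications $C$ is the (closed, convex) set of minimizers of a lsc convex function, so your reading matches the intended statement; it would be worth stating these standing assumptions explicitly.
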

\begin{proof}
 See \cite[Proposition 3.3]{mb}.
\end{proof}

%%%%%%%%%%%%%%%%%%%%%%%%%%%%%%%%%%%%%%%%%%%%%%%%%%%%%%%%%%%%%%%%%%%%%%%%%%%%%%%%%%%%%%%%%%%%%%%%%%%%%%%%%%%%%%%%%%%%%%%%%%%%%%%%%%%%%%%%%%%%%%%%%%%%%%%%%%%%%%%%%%%%%%%%%%%%%%%%%%%%%%%%%%%%%%%%%%%%%%%%%%%%%%%%%%%%%%%%%%%
\section{Proofs of the theorems} \label{sec:proof}

We begin with a useful lemma, whose proof follows easily from the fact that a~closed convex subset of a complete CAT(0) space is (sequentially) weakly closed \cite[Lemma 3.1]{mb}. The symbol $\clco A$ stands for the closed convex hull of a subset $A$ of a CAT(0) space $X,$ that is, the smallest closed convex subset of $X$ containing $A.$
\begin{lem} \label{lem:lsc}
Let $X$ be a complete CAT(0) space. If $f:X\to\exrls$ a lsc convex function, then it is weakly lsc.
\end{lem}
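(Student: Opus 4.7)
The plan is to reduce weak lower semicontinuity to the statement that sublevel sets are weakly closed, which the excerpt has already flagged as a consequence of \cite[Lemma 3.1]{mb}.

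First I would fix a sequence $(x_n)\subset X$ with $x_n\wto x$ and set $L=\liminf_{n\to\infty}f(x_n)$; the goal is $L\geq f(x)$. Arguing by contradiction, assume $L<f(x)$ and pick any $\alpha\in\rls$ with $L<\alpha<f(x)$ (if $L=-\infty$ any $\alpha<f(x)$ works, while $f(x)>-\infty$ by the convention $f:X\to\exrls$). Extract a subsequence $(x_{n_k})$ with $f(x_{n_k})\to L$, so that $f(x_{n_k})\leq\alpha$ for all sufficiently large $k$.

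Next I would introduce the sublevel set $C_\alpha=\{y\in X:f(y)\leq\alpha\}$. Since $f$ is convex, $C_\alpha$ is convex, and since $f$ is lsc, $C_\alpha$ is closed. By the cited fact that a closed convex subset of a complete CAT(0) space is sequentially weakly closed, $C_\alpha$ is weakly closed. Now observe that $(x_{n_k})$ itself still satisfies $x_{n_k}\wto x$: indeed, the definition in Subsection~\ref{subsec:weak} requires $x$ to be the asymptotic center of \emph{every} subsequence of $(x_n)$, and every subsequence of $(x_{n_k})$ is a subsequence of $(x_n)$. Combined with $x_{n_k}\in C_\alpha$ eventually, weak closedness yields $x\in C_\alpha$, i.e.\ $f(x)\leq\alpha$, contradicting $\alpha<f(x)$.

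There is no real obstacle here; the only point that deserves care is checking that weak convergence passes to subsequences under the asymptotic-center definition used in this paper, and noting that the case $L=-\infty$ presents no extra difficulty because $f$ does not take the value $-\infty$.
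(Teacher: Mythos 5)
Your proof is correct and takes essentially the same route as the paper: both argue by contradiction and reduce everything to the cited fact that a closed convex subset of a complete CAT(0) space is sequentially weakly closed, applied to a closed convex set that contains a tail of a subsequence but cannot contain $x$. The only (cosmetic) difference is that you use the sublevel set $\{y\in X: f(y)\leq\alpha\}$, which is closed and convex automatically, whereas the paper uses the closed convex hull of a tail of the subsequence and notes that $f\leq f(x)-\delta$ there by convexity and lower semicontinuity.
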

\begin{proof}
By contradiction. Let $(x_n)\subset X, x\in X$ and $x_n\wto x.$ Suppose that
$$\liminf_{n\to\infty} f(x_n) < f(x).$$
That is, there exist a subsequence $(x_{n_k}),$ index $k_0\in\nat,$ and $\delta>0$ such that $ f(x_{n_k}) < f(x)-\delta$
for all $k>k_0.$ By lower semicontinuity and convexity of $f,$ we get
$$ f(y) \leq f(x)-\delta$$
for all $y\in \clco \{x_{n_k}:k>k_0\}.$ But this, through \cite[Lemma 3.1]{mb}, yields a contradiction to $x_n\wto x.$
\end{proof}

Now we give the proof of Theorem~\ref{thm:ppa}.

\begin{proof}[Proof of Theorem~\ref{thm:ppa}]
The set of minimizers of $f,$ 
$$C=\left\{c\in X:f(c)=\inf_{x\in X} f(x)\right\}$$
is by the assumptions nonempty. Without loss of generality suppose $f_{\res_C}=0.$ We first show that the sequence $(x_n)$ is Fej\'er monotone with respect to $C.$ That is, for a given $c\in C,$ we are to verify
\begin{equation}
 \label{eq:fejerc}  d(x_n,c)\leq d(x_{n-1},c), \quad\text{for all } n\in\nat.
\end{equation}
Denote the $f(x_n)$-sublevel set of $f$ by
$$A=\left\{x\in X:f(x)\leq f(x_n)\right\}.$$
Then $c\in A,$ and $x_n=P_A(x_{n-1}),$ which together, along with Proposition~\ref{prop:proj}, yield $\alpha\left(x_{n-1},x_n,c\right)\geq\pi/2.$ Hence we get~\eqref{eq:fejerc}.

In the next step we show that the inequality
\begin{equation}
 \label{eq:estim}
 \lambda_k f(x_k)\leq \frac12 d(x_{k-1},c)^2 -\frac12 d(x_k,c)^2
\end{equation}
holds for any $c\in C$ and $k\in\nat.$ Indeed, from the definition of $x_k$ we have
$$\lambda_k f(x_k)+\frac12 d(x_k,x_{k-1})^2\leq \lambda_k f(p)+\frac12 d(p,x_{k-1})^2,$$
for any $p\in X.$ In particular, let $t\in[0,1)$ and $p_t=tx_k+(1-t)c,$ then
$$\frac12 d(x_k,x_{k-1})^2-\frac12 d(p_t,x_{k-1})^2\leq\lambda_k\left[ f(p_t)-f(x_k) \right].$$
Applying \eqref{eq:cat} to the above inequality gives
\begin{align*}
\lambda_k(1-t)\left[ f(c)-f(x_k) \right]\geq & -\frac{1-t}2 d\left(c,x_{k-1}\right)^2 \\ & + \frac{1-t}2 d\left(x_k,x_{k-1}\right)^2 \\ &+\frac{t(1-t)}2 d\left(x_k,c\right)^2,
\end{align*}
or, after taking into account that $f(c)=0,$ and $t\neq1,$
$$\lambda_k f(x_k)\leq \frac12 d\left(c,x_{k-1}\right)^2-\frac12 d\left(x_k,x_{k-1}\right)^2 -\frac{t}2 d\left(x_k,c\right)^2.$$
Since this inequality holds for all $t\in[0,1),$ we conclude that
$$\lambda_k f(x_k)\leq \frac12 d\left(c,x_{k-1}\right)^2-\frac12 d\left(x_k,x_{k-1}\right)^2 -\frac12 d\left(x_k,c\right)^2,$$
and therefore \eqref{eq:estim} holds true.

From \eqref{eq:estim} and from the monotonicity of $\left(f(x_n)\right)_n,$ we now obtain
$$2f(x_n)\sum_{k=1}^n\lambda_k \leq 2\sum_{k=1}^n\lambda_k f(x_k)\leq d(x_0,c)^2-d(x_n,c)^2,$$
and
\begin{equation} \label{eq:rate}
f(x_n)\leq\frac{d(x_0,c)^2}{\sum_{k=1}^n\lambda_k}.
\end{equation}

By the assumptions, the right hand side of the last inequality goes to $0,$ as $n\to\infty.$ We thence have that $(x_n)$ is a minimizing sequence, that is, $f(x_n)\to0$ as $n\to\infty.$

Assume now that a point $x_\infty\in X$ is a weak cluster point of $(x_n),$ that is, there exists a subsequence $\left(x_{n_k}\right)$ of $(x_n)$ such that $\left(x_{n_k}\right)\wto x_\infty.$ Since $f$ is lsc, and therefore weakly lsc by Lemma~\ref{lem:lsc}, we get $f\left(x_\infty\right)=0,$ and hence $x_\infty\in C.$ Applying Proposition~\ref{prop:fejer}\eqref{item:iii} finally gives $x_n\wto x_\infty,$ as $n\to\infty.$ This finishes the proof.
\end{proof}

The proof of Theorem~\ref{thm:flow} is similar to that of Theorem~\ref{thm:ppa}. The main ingredients come from \cite{mayer}.

\begin{proof}[Proof of Theorem \ref{thm:flow}]
Let $x\in \cldom f$ be a (starting) point. We first observe that $\left(T_\lambda x\right)_{\lambda\geq0}$ weakly converges to a point $z\in X,$ as $\lambda\to\infty,$ if and only if, for any sequence $(\lambda_n)\subset[0,\infty)$ such that $\lambda_n\to\infty,$ the sequence $\left(T_{\lambda_n} x\right)_n$ weakly converges to $z,$ as $n\to\infty.$ Take therefore an arbitrary sequence $(\lambda_n)\subset[0,\infty)$ such that $\lambda_n\to\infty.$ Denote
$$C=\left\{c\in X:f(c)=\inf_{x\in X} f(x)\right\}.$$
The set $C$ is nonempty, and $\left(T_{\lambda_n} x\right)_n$ is Fej\'er monotone with respect to $C$ by \cite[Theorem 2.38]{mayer}. Furthermore, the sequence $\left(T_{\lambda_n} x\right)_n$ is minimizing, that is,
$$\lim_{n\to\infty} f\left(T_{\lambda_n} x\right)= \inf_{y\in X} f(y),$$
by \cite[Theorem 2.39]{mayer}. The same argument as above finishes the proof: $f$ is weakly lsc (Lemma~\ref{lem:lsc}), and it follows that all weak cluster points of $\left(T_{\lambda_n} x\right)$ lie in $C.$ Finally, apply Proposition~\ref{prop:fejer}\eqref{item:iii}.
\end{proof}

\begin{proof}[Proof of Remark \ref{rem:uc}]
Here we will show that if the function $f$ is uniformly convex on bounded subsets of $\dom f$ with modulus $\phi,$ we have even strong convergence in Theorems~\ref{thm:ppa} and~\ref{thm:flow}. Indeed, let $(x_n)$ be the sequence from Theorem~\ref{thm:ppa}. We already know that it is bounded, and hence
\begin{equation} \label{eq:unif}
\frac14\phi\left(d\left(x_n,x_m\right)\right)\leq\frac12f(x_n)+\frac12f(x_m)-f\left(\frac{x_m+x_n}2\right),
\end{equation}
for any $m,n\in\nat.$ We also already know that $(x_n)$ is a minimizing sequence for $f,$ and since $\phi$ vanishes only at $0,$ we get by \eqref{eq:unif} that $(x_n)$ is Cauchy.

The case of Theorem~\ref{thm:flow} is similar.
\end{proof}

\bibliographystyle{siam}
\bibliography{ppa-cat}

\end{document}